\DeclarePairedDelimiter\abs{\lvert}{\rvert}%
\newtheorem{theorem}{Theorem}
\theoremstyle{definition}
\newtheorem{definition}{Definition}
\newtheorem{rem}{Remark}
\newtheorem{prop}{Proposition}
\newtheorem{lem}{Lemma}
\newtheorem{corollary}{Corollary}
\newcommand{\vv}[1]{\ensuremath{V_{#1}}}
\newcommand{\vn}[2]{\ensuremath{V_{#1}(#2)}}
\newcommand{\pp}[1]{\ensuremath{P_{#1}}}
\newcommand{\pn}[2]{\ensuremath{P_{#1}(#2)}}
\newcommand{\cc}[1]{\ensuremath{C_{#1}}}
\newcommand{\cn}[2]{\ensuremath{C_{#1}(#2)}}
\begin{document}

\title{A Regular $N$-gon Spiral}
\markright{A Regular $N$-gon Spiral}

\author{Kyle Fridberg}
\address{Department of Mathematics, Harvard University, Cambridge MA 02138}
\email{kof4@cornell.edu}

\date{14 April 2024}
\keywords{polygonal spiral, harmonic series, regular polygon, polygonal curve, exponential sum}
\subjclass{Primary: 51N20; Secondary: 40A05, 11L03}

\begin{abstract}
We construct a polygonal spiral by arranging a sequence of regular $n$-gons such that each $n$-gon shares a specified side and vertex with the $(n+1)$-gon in the construction. By offering flexibility for determining the size of each $n$-gon in the spiral, we show that a number of different analytical and asymptotic behaviors can be achieved. 
\end{abstract}

\maketitle

\section{Introduction}

Spirals are pervasive in fluid motion, biological structures, and engineering, affording numerous applications of mathematical spirals in modeling real-world phenomena \cite[ch.~11-14]{Thompson} \cite{Davis, Hammer}. Polygonal spirals are loosely defined as spirals that can be constructed using a sequence of polygons obeying a geometric relation. For example, in Fig. \ref{theo_spiral} we show how to join together right triangles in an infinite sequence to obtain the spiral of Theodorus---one of the oldest and most well-studied polygonal spirals \cite{Brink, Davis}. 

\begin{figure}[!htb]
    \begin{centering}
	\includegraphics[width=0.45\textwidth]{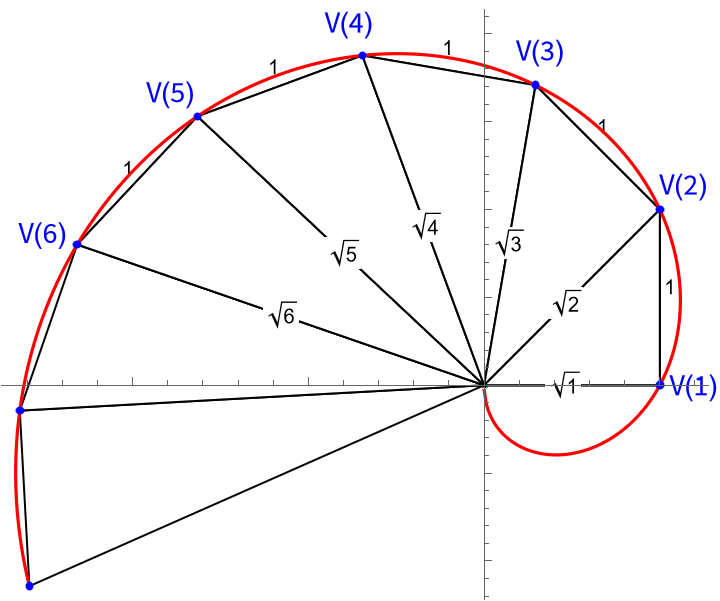}
	\caption{Plot of the first 7 triangles in the spiral of Theodorus. The sequence of shared vertex points $\vn{}{n}$ and the smooth interpolation curve proposed by Davis \cite[p.~38]{Davis} are also shown.}
    \label{theo_spiral}
    \end{centering}
\end{figure}
 In Fig. \ref{theo_spiral}, we let $\vn{}{n}$ denote the unique non-origin shared vertex of the triangle with hypotenuse $\sqrt{n}$ and the triangle with hypotenuse $\sqrt{n+1}$. Considerable attention has been given to studying the sequence $\vn{}{n}$ \cite{Brink, Davis, Gautschi, Hlawka}. The \textit{key idea} in the spiral of Theodorus construction is to define a sequence of polygons (i.e., right triangles) such that consecutive polygons share a unique side and at least one unique vertex point. By changing right triangles to similar triangles, or other types of similar polygons, we find that this idea is ubiquitous in the literature on polygonal spirals \cite{Anatriello, Strizhevsky, Yap} \cite[ch.~34]{Hammer}. 

In this paper, we make use of this key idea to construct a spiral made out of a sequence of regular $n$-gons. In particular, we start by considering the case where all the $n$-gons are normalized to have perimeter $1$ (see Fig. \ref{lam1}). We begin the construction with an equilateral triangle with side length $\frac{1}{3}$ in the first quadrant of the complex plane. Along the upper right edge of the triangle, we construct a square with side length $\frac{1}{4}$. We continue in this way, following Definition \ref{dcon}, to construct the pentagon, hexagon, etc., leading to the construction shown below. We let $V(n)$ denote the unique shared vertex point of the $n$-gon and $(n+1)$-gon in this sequence. The smooth continuation of the vertex point sequence $V(n)$ yields a new plane curve constructed from polygons (see \cite{Shikin} and \cite{Yates} for an extensive discussion of other plane curves which arise from classical geometry).

\begin{figure}[!htb]
    \begin{centering}
	\includegraphics[width= 0.6\textwidth]{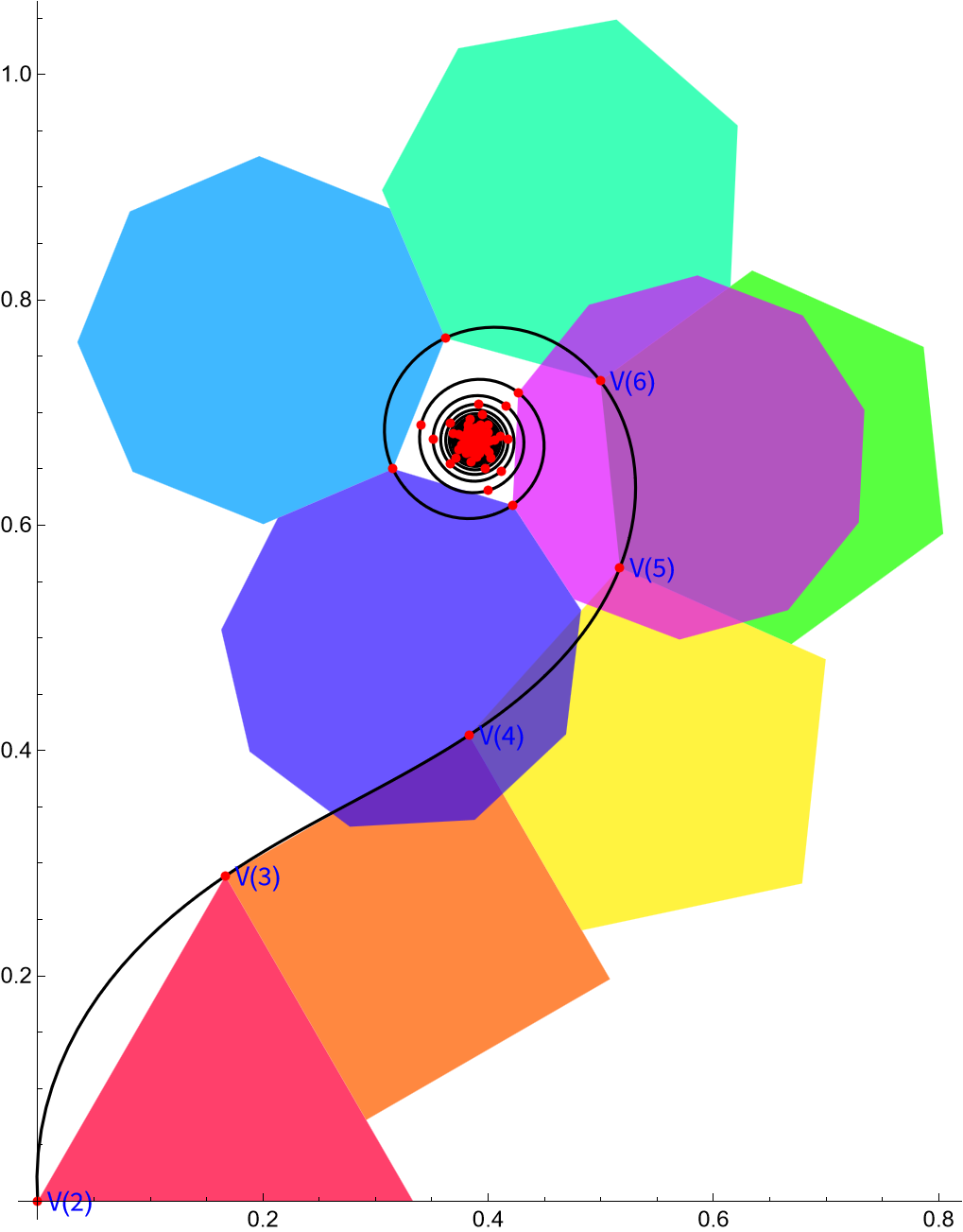}
	\caption{Plot of the first few polygons of the Definition \ref{dcon} construction (choosing $l(n) = \frac{1}{n}$). The shared vertex points $\vn{}{n}$ (see Lemma \ref{lamv}) and a smooth interpolation curve defined in Proposition \ref{interp} are also shown.}
    \label{lam1}
    \end{centering}
\end{figure}

 We desire to find the $n \to \infty$ limit behavior of the sequence $V(n)$ shown in the Fig. \ref{lam1} construction. In particular, we pose a slightly more general question: if the side length of each $n$-gon in the Fig. \ref{lam1} construction is instead given by $\ell_s(n) = n^{-s}$, for which values of $s \in \mathbb{R}$ does the sequence $\vn{\ell_s}{n}$ converge as $n \to \infty$? The answer to this question is one of the main results of this paper:

\begin{theorem}[Convergence of $\vn{\ell_s}{n}$]
\label{thm1}
\begin{flalign*}
    \text{ As } n \to \infty,  \hspace{1mm} \vn{\ell_s}{n} & \begin{cases}
    \text{converges to a point when } s > 0.\\
    \text{converges to a circular orbit when } s=0.\\
    \text{diverges when } s < 0.
    \end{cases}
\end{flalign*}
\end{theorem}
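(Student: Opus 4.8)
The plan is to reduce everything to the behaviour of one exponential sum and then split into the three regimes by Abel summation. By Lemma~\ref{lamv} the shared vertices have a closed form; after a fixed similarity of the plane one may write the increment $\delta_k := \vn{\ell_s}{k} - \vn{\ell_s}{k-1}$ as $\delta_k = \ell_s(k)\, e^{i\theta_k}$, where the per-step increment of the argument has the form $\theta_k - \theta_{k-1} = \theta^* + \beta/k + O(1/k^2)$, so that
\[
\vn{\ell_s}{n} = \text{const} + \sum_{k} \ell_s(k)\, e^{i\theta_k}, \qquad \ell_s(k) = k^{-s}, \qquad \theta_k = \theta^* k + \beta H_k + O(1),
\]
with $H_k = \sum_{j\le k} 1/j = \ln k + \gamma + O(1/k)$ and $\theta^* \not\equiv 0 \pmod{2\pi}$ (otherwise the turns would degenerate and the spiral would fail to wind). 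The three cases of Theorem~\ref{thm1} correspond exactly to $s<0$, $s=0$, and $s>0$.

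The case $s<0$ is immediate: there $\abs{\delta_k} = \ell_s(k) = k^{\abs{s}} \to \infty$, so the increments do not tend to $0$, the sequence $\vn{\ell_s}{n}$ is not Cauchy, and it diverges.

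For $s>0$ I would use the (generalized) Dirichlet test. Set $A_N := \sum_{k\le N} e^{i\theta_k}$; summation by parts gives
\[
\sum_{k\le N} \ell_s(k)\, e^{i\theta_k} = \ell_s(N)\, A_N - \sum_{k<N} \bigl(\ell_s(k+1) - \ell_s(k)\bigr) A_k,
\]
and since $\ell_s(k) = k^{-s}$ decreases monotonically to $0$, the right-hand side converges provided $A_N$ stays bounded. Proving $A_N = O(1)$ is the heart of the matter: using $H_k = \ln k + \gamma + O(1/k)$ to write $e^{i\theta_k}$ as a unimodular constant times $e^{i\theta^* k} k^{i\beta}$ with a small correction, one compares $A_N$ with $\int_1^N e^{i\theta^* x}\,x^{i\beta}\,dx$ through Euler--Maclaurin and integrates by parts; since the phase derivative $\theta^* + \beta/x$ stays (for large $x$) bounded away from $2\pi\mathbb{Z}$ there is no stationary phase, and a careful accounting of the correction terms (each of size $O(1/k)$ but oscillating) yields
\[
A_N = \frac{1}{i\theta^*}\, e^{i\theta_N} + C_0 + o(1)
\]
for a constant $C_0$. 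In particular $A_N = O(1)$, so the series converges and $\vn{\ell_s}{n}$ tends to a point.

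The same expansion settles $s=0$. Now $\ell_0(k)\equiv 1$, so $\vn{\ell_0}{n}$ equals $A_n$ plus a fixed constant, whence $\vn{\ell_0}{n} = p + \tfrac{1}{i\theta^*}\, e^{i\theta_n} + o(1)$ for a fixed point $p$; as $\theta_n\to\infty$ the phasor $e^{i\theta_n}$ becomes dense on the unit circle, so $\vn{\ell_0}{n}$ stays within $o(1)$ of the circle of radius $1/\abs{\theta^*}$ about $p$ and is dense on it --- the asserted circular orbit. I expect the estimate $A_N = O(1)$ for this harmonic exponential sum to be the only genuine obstacle; with it in hand the three regimes separate cleanly, and the threshold sits precisely at $s=0$ because $\ell_s(k)=k^{-s}$ decreases to $0$ exactly when $s>0$ while it tends to $\infty$ exactly when $s<0$.
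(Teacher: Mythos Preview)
Your overall architecture---reduce to the exponential sum $A_N=\sum_{k\le N}e^{i\theta_k}$, then use Abel summation for $s>0$ and read off the circular orbit for $s=0$ from an asymptotic $A_N=p+\rho\,e^{i\theta_N}+o(1)$---is a perfectly legitimate alternative to the paper's argument, and once $A_N=O(1)$ is established the three regimes do separate exactly as you say. The paper instead pairs consecutive terms to kill the sign $(-1)^k$ and shows the paired series converges absolutely for $0<s\le1$; for $s=0$ it again pairs and then applies Euler--Maclaurin to the slowly varying paired sum. So the two routes diverge precisely at how one controls $A_N$.

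The gap is in your proposed proof of $A_N=O(1)$. From the explicit formula $\theta_k=2\pi\bigl(\tfrac{k}{2}+\tfrac{1}{k}-2H_k\bigr)$ one has $\theta^*=\pi$ and $\beta=-4\pi$ (and the remainder is $2\pi/k=O(1/k)$, not merely $O(1)$). Because $\theta^*=\pi$, the factor $e^{i\theta^* k}=(-1)^k$ oscillates on the scale of the lattice spacing, and the Euler--Maclaurin comparison of $\sum_{k\le N}e^{i\pi k}k^{i\beta}$ with $\int_1^N e^{i\pi x}x^{i\beta}\,dx$ is not useful: the derivatives of the summand satisfy $\lvert g^{(m)}(x)\rvert\asymp\pi^m$, so the E--M remainder $\int B_{2p}(\{x\})\,g^{(2p)}(x)\,dx$ is of size $\asymp N$ for every $p$ and does not vanish. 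A concrete symptom is your predicted orbit radius $1/\lvert\theta^*\rvert=1/\pi$: in fact $A_N=C+\tfrac12\,e^{i\theta_N}+o(1)$, giving the diameter-$1$ circle that the paper obtains. The cleanest repair is to first pair terms, writing $A_{2n}=\sum_{j}\bigl(f(2j)-f(2j-1)\bigr)$ with $f(2j)-f(2j-1)=f(2j)\bigl(1-e^{2\pi i(1/(2j-1)+1/(2j))}\bigr)\sim -\tfrac{2\pi i}{j}f(2j)$; after this the phase varies slowly, Euler--Maclaurin (or a one-term Taylor expansion plus Dirichlet's test) gives $A_{2n}=C+\tfrac12 f(2n)+o(1)$, and the odd case follows. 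But that pairing step is exactly the paper's device, so your genuinely different ingredient---the direct integral comparison---does not survive, while the Abel-summation packaging for $s>0$ remains a nice alternative once $A_N=O(1)$ is secured by other means.
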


We conclude the introduction by outlining the rest of the paper. In Section \ref{construct}, we formally define the geometric construction of the $n$-gon spiral (Definition \ref{dcon}) with an arbitrary length function, leading to a formula for the shared polygon vertex points (Lemma \ref{lamv}). We also derived a formula for the sequence of polygon centers (Proposition \ref{centers}) and a curve which provides a smooth continuation of the shared vertex point sequence (Proposition \ref{interp}). In Section \ref{converge}, we use Lemma \ref{lamv} to prove Theorem \ref{thm1}, and then give a corollary to the theorem. In Section \ref{telescoping_spiral}, we introduce a special case of the $n$-gon spiral which admits particularly nice algebraic expressions, including a closed-form smooth continuation of the discrete spiral to real values of $n$ (Theorem \ref{thm_analytic}) and an appearance of the golden ratio (Remark \ref{rem1}).

\section{List of Symbols and Notations}
\label{notation}
For the convenience of the reader, we provide a list of notations and where they are defined. For example, 4, Eqn. \ref{t1} means a notation is defined in Equation \ref{t1} on page 4. Note that $\vv{l}$ and $\cc{l}$ are defined geometrically on page $3$, but their algebraic expressions appear on page $5$.

\medskip

\begin{tabular}{p{0.25\textwidth}p{0.25\textwidth}p{0.25\textwidth}p{0.25\textwidth}}
$\pp{l}$& 3, Def. \ref{dcon}&$\tilde{V}_l$& 6, Prop. \ref{interp}\\
$\vv{l}$& 3, Def. \ref{dcon}; 5, Lem. \ref{lamv} & $\ell_s$ &6, Eqn. \ref{ells}\\
$\cc{l}$&3, Def. \ref{dcon}; 5, Prop. \ref{centers}&$W$&6, Eqn. \ref{ws}\\
$\theta_n$&4, Eqn. \ref{t1}&$f$&6, Eqn. \ref{fk}\\
$H_n$&4, Eqn. \ref{Hn}&$L$&10, Thm. \ref{thm_analytic}\\
$Q_l$&5, Prop. \ref{centers}& & \\
\end{tabular}

\section{The formal construction}
\label{construct}

\begin{definition}[Construction of the discrete $n$-gon spiral $\pp{l}$]\
\label{dcon}

\begin{enumerate}
    \item Starting with $n=2$, the $n$th polygon in the spiral construction is a regular $n$-gon with side length $l(n)$, where $l: \mathbb{N} \to \mathbb{R}$ is called the length function.
    
    \item Consecutive polygons share a side and at least one vertex. Only consecutive polygons may share a side.
    
    \item Shared vertices of each polygon are consecutive vertices, and the side connecting two shared vertices is not a shared side.
    
    \item Consecutive polygons do not overlap (i.e. the shared area between consecutive polygons is 0).
    
    \item \textit{Notation:} let $\pp{l}$ represent the infinite sequence of polygons defined via $1$--$4$ above.
\begin{itemize}
    \item In this sequence, $\pn{l}{n}$ represents the $n$-gon in $\pp{l}$.
    \item Let $\vv{l}$ denote the infinite sequence of shared vertex points of $\pp{l}$. In this sequence, $\vn{l}{n}$ represents the vertex of $\pn{l}{n}$ and $\pn{l}{n+1}$ that is shared for all choices of $l(n)$.
    \item Let $\cc{l}$ denote the infinite sequence of polygon centers of $\pp{l}$. In this sequence, $\cn{l}{n}$ represents the center of $\pn{l}{n}$.
\end{itemize}
    
\end{enumerate}

\end{definition}

Once the length function $l(n)$ is chosen and the coordinates of the $2$-gon and $3$-gon are given, Definition \ref{dcon} fully determines $\pp{l}$. We will see in the derivation of Lemma \ref{lamv} that there is an algebraically simplest orientation of the construction in the complex plane, which we will adopt.

To derive a formula for the shared vertex points $\vn{l}{n}$ (Lemma \ref{lamv}), we begin by following Definition \ref{dcon} to draw an arbitrary $n$-gon and $(n+1)$-gon in the $\pp{l}$ construction. Using that the interior angles of a regular $n$-gon are $\alpha_n := \frac{\pi(n-2)}{n}$ radians, we want to find the angle with respect to the horizontal ($\theta_n$) that must be followed to get from the shared vertex point $\vn{l}{n-1}$ to $\vn{l}{n}$, pictured in Fig. \ref{angles}. In this diagram, we assume $\theta_n, \theta_{n+1} < \pi$. Equivalent results (mod $2\pi$) are obtained by considering the other relevant cases.

\begin{figure}[!htb]
    \begin{centering}
	\includegraphics[width=0.48\textwidth]{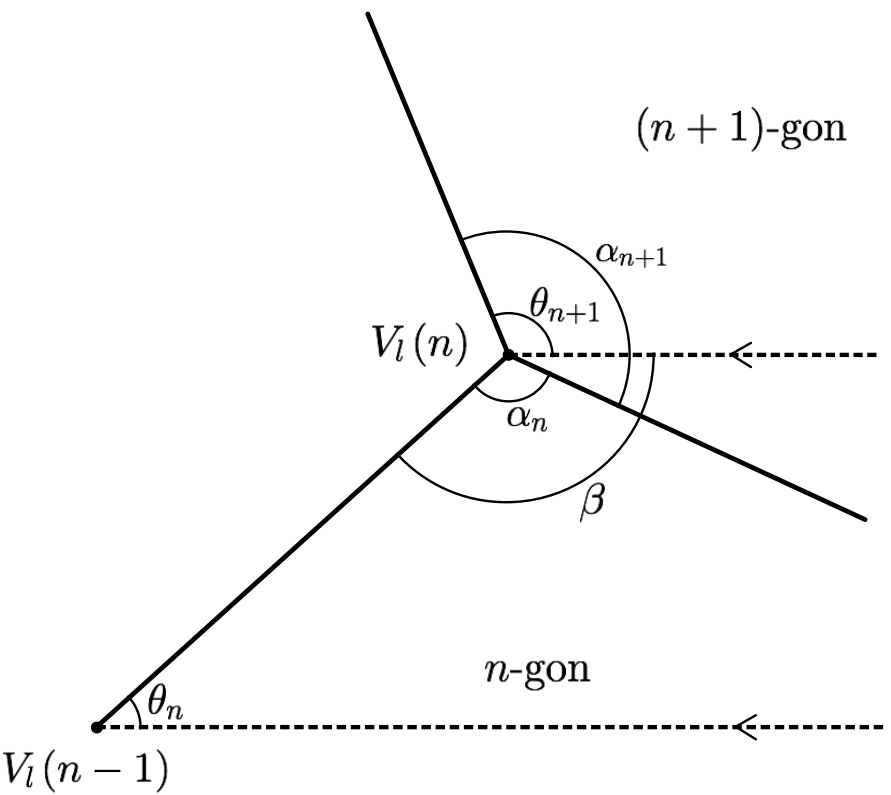}
	\caption{$\pp{l}$ angular relationships.}
    \label{angles}
    \end{centering}
\end{figure}

In the Fig. \ref{angles} diagram, we observe that the angles $\theta_n$ and $\beta = \alpha_n + \alpha_{n+1} - \theta_{n+1}$ in Fig. \ref{angles} are supplementary, which affords an angular recurrence:
\begin{equation}
     \theta_n + \beta = \pi \implies \theta_{n+1} = \theta_{n} + \frac{(n-2)\pi}{n} + \frac{(n-1)\pi}{n+1} - \pi, \hspace{2mm} n > 1. \label{t1_recurrence}
\end{equation}

To solve the recurrence for $\theta_n$, we need a starting value for $\theta_2$---a choice which determines the orientation of $\pp{l}$ in the complex plane. Appealing to simplicity, we find that choosing $\theta_2 = -3 \pi$ ensures that there is no nontrivial constant rotation applied to all elements of $\pp{l}$ (i.e. $\theta_n$ mod $2\pi$ has no constant additive term). This choice affords the orientation shown in Fig. \ref{lam1}. 

With $\theta_2$ in hand, we note that (\ref{t1_recurrence}) is a first order linear recurrence, and thus $\theta_{n+1}$ can be rewritten as a sum. Simplification of the resulting sum yields the angular relation for $\pp{l}$:
\begin{equation}
\label{t1}
    \theta_n = 2\pi\Big(\frac{n}{2}+\frac{1}{n}-2 H_n\Big), \hspace{2mm} n > 1,
\end{equation}

\noindent where $H_n$ is the $n$th harmonic number, i.e.  
\begin{equation}
    \label{Hn}
    H_n = \sum_{k=1}^n \frac{1}{k}.
\end{equation}

Using (\ref{t1}), we are now ready to write down an analytical expression for the shared vertex points $\vn{l}{n}$. Examining Fig. \ref{lam1}, we observe that each shared vertex point $\vn{l}{n}$ can be written as a the sum over the positive integers $k \leq n$ of the $k$-gon side length $l(k)$ multiplied by the corresponding rotation, $e^{i \theta_k}$. We set $\vn{l}{2} := 0$ to start the construction at the origin, as illustrated in Fig. \ref{lam1}. This is achieved by starting the sum at $k=3$. We record a formula for the shared vertex points in the following lemma.

\begin{lem}[Expression for shared vertex points of the $n$-gon spiral]
\label{lamv}
\begin{equation*}
    \vn{l}{n} = \sum_{k=3}^n l(k) \hspace{1mm} e^{i \theta_k} = \sum_{k=3}^n (-1)^k l(k) \hspace{1mm} e^{2 \pi i (\frac{1}{k} - 2H_k)}, \hspace{2mm} n \in \mathbb{N}_{> 1}.
\end{equation*}
\end{lem}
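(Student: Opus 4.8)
The plan is to derive the vertex formula directly from the geometric setup described before the statement, essentially by formalizing the informal sentence ``each shared vertex point $\vn{l}{n}$ can be written as the sum over $2 \le k \le n$ of the $k$-gon side length $l(k)$ multiplied by the corresponding rotation $e^{i\theta_k}$.'' First I would set up the induction: the base case is $\vn{l}{2} := 0$, which matches the empty sum, and I would also record $\vn{l}{3} = l(3) e^{i\theta_3}$ coming directly from Definition \ref{dcon} once the orientation $\theta_2 = -3\pi$ is fixed. The inductive step is the claim that $\vn{l}{n} = \vn{l}{n-1} + l(n)\, e^{i\theta_n}$: the segment from $\vn{l}{n-1}$ to $\vn{l}{n}$ is precisely the shared side of the $(n-1)$-gon and the $n$-gon (by rules 2 and 3), so it has length $l(n)$ — wait, here one must be careful about which side length governs the step, since $\pn{l}{n-1}$ has side $l(n-1)$ and $\pn{l}{n}$ has side $l(n)$, and the shared side is common to both; I would reconcile this with the indexing in Fig. \ref{angles} by noting that $\theta_n$ was defined there as the direction from $\vn{l}{n-1}$ to $\vn{l}{n}$, and that the displayed sum in the lemma starts at $k=3$, so the term $l(k)e^{i\theta_k}$ accounts for the step into the $k$-gon; consistency of this bookkeeping with equation \eqref{t1_recurrence} and the stated $\theta_2$ is the one genuine thing to verify. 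Summing the telescoping increments from $k=3$ to $n$ then gives the first equality.

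Second, I would substitute the closed form \eqref{t1} for $\theta_k$ into $e^{i\theta_k}$ to obtain the second equality. Here $e^{i\theta_k} = e^{2\pi i(k/2 + 1/k - 2H_k)} = e^{\pi i k}\, e^{2\pi i(1/k - 2H_k)} = (-1)^k e^{2\pi i(1/k - 2H_k)}$, using that $e^{\pi i k} = (-1)^k$ for integer $k$. Multiplying through by $l(k)$ and summing yields exactly the second displayed sum, completing the proof.

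The only real obstacle is not an analytic one but a bookkeeping one: making sure the index on $l$ and the index on $\theta$ line up so that the step from $\vn{l}{n-1}$ to $\vn{l}{n}$ really contributes the summand $l(n)e^{i\theta_n}$ and not $l(n-1)e^{i\theta_n}$ or $l(n)e^{i\theta_{n-1}}$. I expect this is resolved by the conventions already baked into Fig. \ref{angles} and the derivation of \eqref{t1_recurrence} (where $\theta_n$ governs the $\vn{l}{n-1}\to\vn{l}{n}$ leg and the shared side between the $(n-1)$-gon and $n$-gon is the one traversed), together with the initialization $\vn{l}{2} = 0$; but it is worth stating explicitly so the reader is not left to re-derive Fig. \ref{angles}. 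Everything else — the telescoping sum, the factorization of the exponential into $(-1)^k$ times the rest, and the check that $\theta_2 = -3\pi$ contributes nothing since the sum starts at $k=3$ — is routine.
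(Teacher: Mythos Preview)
Your approach is correct and is essentially the paper's own argument, only made more explicit: the paper simply states that ``each shared vertex point $\vn{l}{n}$ can be written as the sum over the integers $2 \le k \le n$ of the $k$-gon side length $l(k)$ multiplied by the corresponding rotation $e^{i\theta_k}$,'' and then records the lemma. Your one flagged bookkeeping worry dissolves once you notice that, by rule~3 of Definition~\ref{dcon}, the segment from $\vn{l}{n-1}$ to $\vn{l}{n}$ is \emph{not} a shared side at all---it is the non-shared side of $\pn{l}{n}$ joining its two shared vertices---so its length is unambiguously $l(n)$, and the increment is indeed $l(n)e^{i\theta_n}$.
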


With an expression for the shared vertex points in hand, we can derive an expression for the center of each polygon in the $n$-gon spiral.

\begin{prop}[Expression for polygon centers of the $n$-gon spiral]\
\label{centers}
\begin{equation*}
    \cn{l}{n} = \vn{l}{n-1} + Q_l(n), \hspace{ 2mm} \text{where } Q_l(n) = \frac{l(n) e^{i \theta_n}}{1-e^{-\frac{2\pi i}{n}}}.
\end{equation*}
\end{prop}

\begin{proof}
Since $\pp{l}$ consists of regular polygons, there is a natural relationship between the sequences $\pp{l},\vv{l},$ and $\cc{l}$. In particular, for each $n$-gon in $\pp{l}$, there exists an isosceles triangle with vertices $\vn{l}{n-1}$, $\vn{l}{n}$, and $\cn{l}{n}$. Fix $n$. Since $\vn{l}{n-1}$ and $\vn{l}{n}$ are consecutive vertices of the $n$-gon (by Definition \ref{dcon}, part 3), the apex angle of the isosceles triangle is $\frac{2\pi}{n}$. Using basic trigonometry, we find that the two legs of the triangle have length $\frac{\left|l(n)\right|}{2\sin(\frac{\pi}{n})}$ and the base angles are $\frac{\pi}{2}-\frac{\pi}{n}$. 
Now, consider a vector $\vec{v}$ from $\vn{l}{n-1}$ to $\vn{l}{n}$. If we rotate $\vec{v}$ 
by $\frac{\pi}{2}-\frac{\pi}{n}$ and scale it to have length $\frac{\left|l(n)\right|}{2\sin(\frac{\pi}{n})}$ then the result will be a vector from  $\vn{l}{n-1}$ to $\cn{l}{n}$. The only question to consider is which direction to rotate. Since $\pp{l}$ winds counterclockwise and the polygons face outward from the spiral, rotating $\vec{v}$ clockwise will achieve the desired result.
In complex coordinates, the vectors just become numbers, and we have $v = \vn{l}{n} - \vn{l}{n-1} = l(n) e^{i \theta_n}$. Rotating $v$ by  $\frac{\pi}{2}-\frac{\pi}{n}$ radians clockwise affords $v' = l(n) e^{i \theta_n} e^{-i(\frac{\pi}{2}-\frac{\pi}{n})}$. Finally, scaling $v'$ to the proper length recovers the desired expression
\begin{equation}
     \frac{v'}{|v'|}\frac{\left|l(n)\right|}{2\sin(\frac{\pi}{n})} = \frac{l(n) e^{i \theta_n}}{1-e^{-\frac{2\pi i}{n}}} = Q_l(n).
\end{equation}
\noindent Then, by construction, we have $\cn{l}{n} = \vn{l}{n-1} + Q_l(n)$.
\end{proof}

Proposition \ref{centers} makes it clear that understanding of the behavior of $\vv{l}$ and $Q_l$ is sufficient for understanding the behavior of $\cc{l}$, and hence $\pp{l}$. \textit{As such, we will frequently refer to the sequence of shared vertex points $\vv{l}$ as ``the $n$-gon spiral."}

For the purposes of visualizing the sequences $\vv{l}$ and $\cc{l}$ and studying their behavior, it is desirable to obtain smooth continuations of the sequences to real values of $n$. Moreover, these continuations represent a new family of polygonal curves. We use a summation rearrangement trick (e.g., see section II in \cite{Albino}) to obtain a smooth continuation of $\vn{l}{n}$ to real $n$. Set $a(k):=l(k) e^{i \theta_k}$, then
\begin{equation}
\label{smoothcont}
    \vn{l}{n} = \sum_{k=3}^{\infty} a(k) - \sum_{k=n+1}^{\infty} a(k) = \sum_{k=3}^{\infty} a(k) - a(k-2+n).
\end{equation}

Assume $l: \mathbb{N} \to \mathbb{R}$ has a smooth $\mathbb{R} \to \mathbb{R}$ continuation. The $k$th harmonic number can be analytically continued to complex values of $k$ via $ H_k = \psi(k+1) + \gamma$, where $\psi(x) = \frac{\Gamma'(x)}{\Gamma(x)}$ is the digamma function and $\gamma$ is Euler's constant. As a result, we know $\theta: \mathbb{N} \to \mathbb{R}$ has the smooth $\mathbb{R} \to \mathbb{R}$ continuation $\theta_k = 2\pi\big(\frac{k}{2}+\frac{1}{k}-2(\psi(k+1)+\gamma)\big)$. Plugging in the smooth continuations of $l$ and $\theta$ into Equation \ref{smoothcont} gives the desired result:

\begin{prop}[Smooth continuation of $\vn{l}{n}$]\
\label{interp}
\begin{equation*}
    \tilde{V}_l(n) := \sum_{k=3}^{\infty} l(k) \hspace{1mm} e^{i \theta_k} - l(k-2+n) \hspace{1mm} e^{i \theta_{k-2+n}}, \hspace{2mm} n \in \mathbb{R}_{> 1}.
\end{equation*}
\end{prop}

\section{Spiral convergence}
\label{converge}

As a step toward understanding the convergence properties of the discrete $n$-gon spiral, we will analyze the limit behavior of the spiral in the case where the length function is 
\begin{equation}
    \label{ells}
    \ell_s(n) = n^{-s}, \hspace{1mm} s \in \mathbb{R}, \hspace{1mm} n \in \mathbb{R}_{>1}.
\end{equation}
 For conciseness, we let $W(s)$ denote the limit behavior of the sequence of shared vertex points,
\begin{equation}
    \label{ws}
    W(s) := \lim_{n \to \infty} \vn{\ell_s}{n} = \sum_{k=3}^{\infty} \frac{(-1)^k f(k)}{k^s},
\end{equation}
where
\begin{equation}
    \label{fk}
    f(k) := e^{2 \pi i (\frac{1}{k} - 2H_k)}.
\end{equation}

\begin{figure}[ht]
\begin{subfigure}{0.59\textwidth}
	\centering\captionsetup{width=.9\linewidth}
	\includegraphics[width=\textwidth]{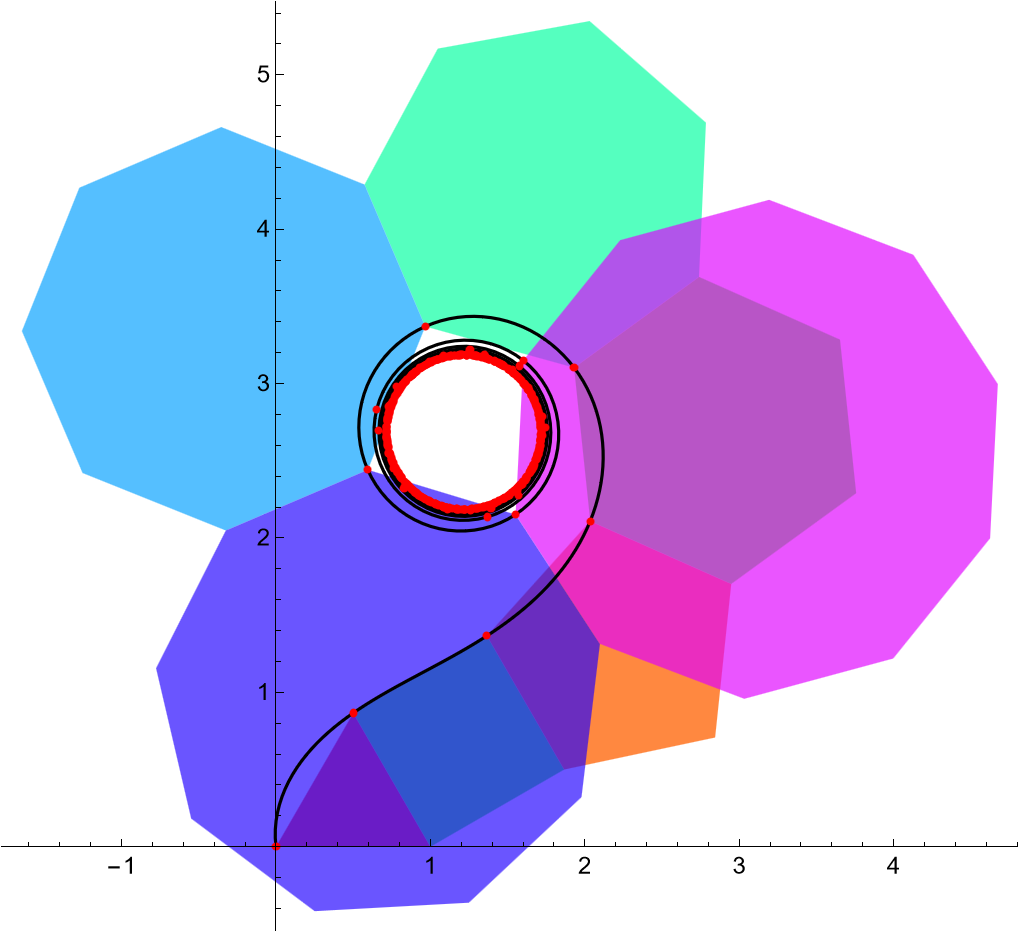}
	\caption{The first few polygons of the spiral $\pp{\ell_0}$ $(s=0)$ plotted with the sequence of shared vertices $\vv{\ell_0}$ and the interpolation curve given by (\ref{interp}). We observe the convergence of $\vv{\ell_0}$ to a circular orbit (Theorem \ref{thm1}). Using Euler's transform for alternating series, we find numerically that the center of this circular orbit is at $\lim_{s \to 0^+} W(s) = 1.21711960256553... + i \hspace{1mm} 2.68541404871695...$}
	\label{n=0_spiral}
\end{subfigure}
\begin{subfigure}{0.37\textwidth}
    \centering\captionsetup{width=.94\linewidth}
	\includegraphics[width=\textwidth]{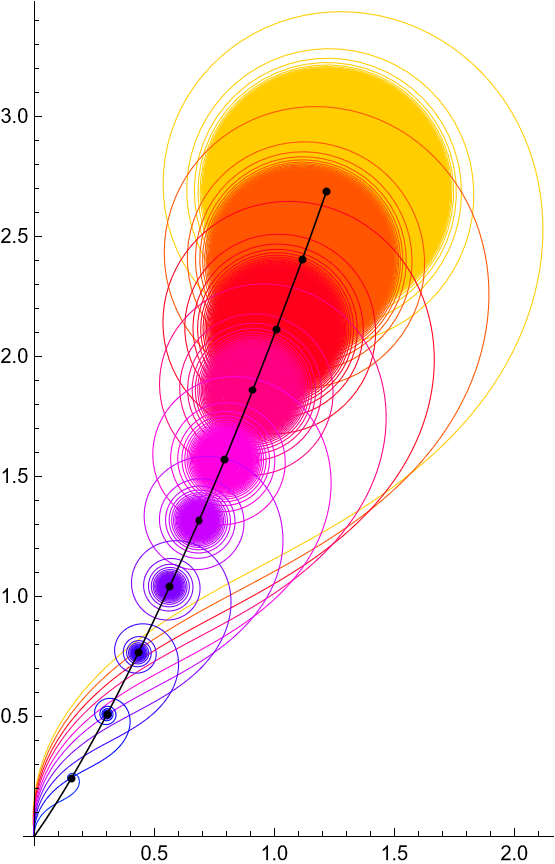}
	\caption{The  convergence points $W(s)$ for $s>0$ trace out a curve in the complex plane (black). We also plot ten spirals (from $s=0.0000726$ to $s=1.77$, interpolated by (\ref{interp})) with convergence points that are equally spaced along the curve.}
	\label{conv_curve}
\end{subfigure}
\caption{Illustration of Theorem \ref{thm1} for $s \geq 0$.}
	\label{spiral_comparison}
\end{figure}

\begin{proof}[Proof of Theorem \ref{thm1}]
We aim to characterize the basic behavior of $W(s)$ for $s \in \mathbb{R}$.

\bigskip 

\noindent \underline{Case 1: $s < 0$.} The individual terms of the series are unbounded, hence $W(s)$ diverges.

\bigskip

\noindent \underline{Case 2: $s > 1$.} $W(s)$ is an absolutely convergent series. 

\bigskip

\noindent \underline{Case 3: $0 < s \leq 1$.} We first accelerate convergence of $\vn{\ell_s}{n}$ by adding consecutive pairs of terms:
\begin{equation}
\label{Saccel}
    \vn{\ell_s}{2n} = \sum_{k=3}^{2n} \frac{(-1)^k f(k)}{k^s} = \sum_{j=2}^{n} F(j), \text{ where } F(j) := \frac{(2j-1)^sf(2j)-(2j)^sf(2j-1)}{((2j-1)(2j))^s}.
\end{equation}

\noindent We now manipulate $F(j)$ in order to show $\vn{\ell_s}{2n}$ is absolutely convergent. To this end, we invoke the identity $H_{2j-1} = H_{2j} - \frac{1}{2j}$ to write $f(2j-1)$ in terms of $f(2j)$:
\begin{equation}
    f(2j-1) = \exp\Big(2\pi i\Big(\frac{1}{2j-1}+\frac{1}{2j}\Big)\Big) f(2j). \label{f2j-1}
\end{equation}

\noindent Plugging (\ref{f2j-1}) and $(2j-1)^s = (2j)^s(1-\frac{1}{2j})^s$ into $F(j)$, we multiply the numerator and denominator of $F(j)$ by $2j-1$, which affords
\begin{equation}
\label{Fj}
    F(j) = \frac{f(2j)\Big((2j-1)\big(1-\frac{1}{2j}\big)^s - (2j-1) \exp\big(2\pi i\big(\frac{1}{2j-1}+\frac{1}{2j}\big)\big)\Big)}{(2j-1)^{1+s}} =: \frac{T(j)}{(2j-1)^{1+s}}.
\end{equation}

\noindent Since $\sum_{j=2}^{\infty} \frac{1}{(2j-1)^{1+s}} = \zeta(1+s)(1-2^{-1-s})-1$ is convergent for $s \in (0,1]$, showing $\abs*{T(j)}$ is bounded gives the desired convergence result. By the triangle inequality,
\begin{flalign}
    \abs*{T(j)} & < \max \hspace{1mm}\abs*{ (2j-1)\Big(1-\Big(1-\frac{1}{2j}\Big)^s\Big)} + \max \hspace{1mm}\abs*{-(2j-1)\Big(1-\exp\Big(2\pi i \Big(\frac{1}{2j-1} + \frac{1}{2j}\Big)\Big)\Big)} \nonumber\\
    & =: \max(A(j,s)) + \max(B(j)).
    \label{Tj}
\end{flalign}

\begin{lem}
\label{maxA}
\begin{equation*}
    \max(A(j,s)) = \lim_{j\to\infty} A(j,s) = s.
\end{equation*}
\end{lem}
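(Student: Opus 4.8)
The plan is to drop the absolute value, reparametrize by the single quantity $u=1-\tfrac{1}{2j}$, and reduce the lemma to an elementary one-variable inequality together with a limit computation. First I would observe that for $s\in(0,1)$ and $j\ge 2$ we have $0<1-\tfrac{1}{2j}<1$, so $0<\bigl(1-\tfrac{1}{2j}\bigr)^{s}<1$ and $2j-1>0$; hence $A(j,s)=(2j-1)\bigl(1-(1-\tfrac{1}{2j})^{s}\bigr)$ is already positive and the absolute value in its definition does nothing. Setting $u=u_j:=1-\tfrac{1}{2j}$ and using $1-u=\tfrac{1}{2j}$, one gets $2j-1=\tfrac{u}{1-u}$, so $A(j,s)=\phi(u_j)$ where $\phi(u):=\dfrac{u\,(1-u^{s})}{1-u}$ on $(0,1)$. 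Since $(u_j)_{j\ge 2}$ increases strictly to $1$, the lemma follows once I show (i) $\phi(u)<s$ for all $u\in(0,1)$ and (ii) $\phi(u)\to s$ as $u\to 1^-$.

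For (ii), write $\phi(u)=u\cdot\dfrac{1-u^{s}}{1-u}$; the second factor is the difference quotient of $t\mapsto t^{s}$ at $t=1$, which tends to the derivative $s$, while $u\to 1$, so $\phi(u)\to s$. For (i), clearing the positive denominator reduces $\phi(u)<s$ to $u-u^{1+s}<s-su$, i.e. to $h(u)<s$ with $h(u):=(1+s)u-u^{1+s}$; since $h(1)=s$ and $h'(u)=(1+s)(1-u^{s})>0$ on $(0,1)$, the function $h$ is strictly increasing there, hence $h(u)<h(1)=s$. Combining (i) and (ii) gives $\sup_{j\ge 2}A(j,s)=s=\lim_{j\to\infty}A(j,s)$, which is the claim (the supremum is approached but not attained, so ``$\max$'' here is read as this supremum).

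As a clean optional addendum that makes the ``$\max=\lim$'' identity most transparent, I would check that $\phi$ is itself strictly increasing on $(0,1)$: the quotient rule gives $\phi'(u)=\dfrac{N(u)}{(1-u)^{2}}$ with $N(u)=1-(1+s)u^{s}+s\,u^{1+s}$, and since $N(1)=0$ and $N'(u)=s(1+s)u^{s-1}(u-1)<0$ on $(0,1)$, we get $N>0$, hence $\phi'>0$; thus $A(j,s)=\phi(u_j)$ is a strictly increasing sequence with limit $s$. There is no real obstacle in this argument---it is a routine calculus estimate---and the only point needing care is keeping the directions of the monotonicities (of $h$, and of $N$ in the addendum) straight, since those are exactly what pin the supremum to the value $s$ rather than merely to some larger bound.
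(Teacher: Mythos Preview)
Your argument is correct. Both you and the paper prove the same two facts---that $A(j,s)<s$ for every $j\ge 2$ and that $A(j,s)\to s$---but by different devices. The paper invokes a Bernoulli-type inequality $1-sy<(1+y)^{-s}$ with $y=\tfrac{1}{2j-1}$, then uses the algebraic identity $(1+\tfrac{1}{2j-1})^{-s}=(1-\tfrac{1}{2j})^{s}$ to read off $0<A(j,s)<s$ directly. You instead reparametrize by $u=1-\tfrac{1}{2j}$, reduce everything to the one-variable function $\phi(u)=u(1-u^{s})/(1-u)$, and prove $\phi(u)<s$ via the monotone auxiliary $h(u)=(1+s)u-u^{1+s}$. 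Your route is fully self-contained (no named inequality to cite) and, through the optional addendum, actually yields more: strict monotonicity of $j\mapsto A(j,s)$, which the paper does not establish. The paper's route is shorter once the Bernoulli bound is granted. One small remark: you restrict to $s\in(0,1)$, but the ambient case in the paper is $0<s\le 1$; your computations (for $h'$ and for $N'$) go through verbatim at $s=1$, so nothing is lost.
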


\begin{proof}
We obtain the $j \to \infty$ limit by direct calculation. To show this is a maximum, we make use of the following Bernoulli-type inequality (which can be readily proved using the generalized binomial theorem):
\begin{equation}
\label{bern}
    1-sy < (1+y)^{-s} < 1 \text{ for } y \in (0,1), \hspace{1mm} s \in (0,1].
\end{equation}
Plugging $y = \frac{1}{2j-1}$ into (\ref{bern}), we simplify to obtain:
\begin{equation}
\label{bernsimp}
    \frac{s}{2j-1} > 1-\Big(1+\frac{1}{2j-1}\Big)^{-s} > 0.
\end{equation}

\noindent Observing that $(1+\frac{1}{2j-1})^{-s} = (1-\frac{1}{2j})^s$, we multiply (\ref{bernsimp}) by $2j-1$ to afford the desired bound:
\begin{equation}
    s > A(j,s) = (2j-1)\Big(1-\Big(1-\frac{1}{2j}\Big)^s\Big) > 0.
\end{equation}
\end{proof}

\begin{lem}
\label{lemBj}
\begin{equation*}
    \max(B(j)) = \lim_{j \to \infty} B(j) = 4\pi.
\end{equation*}
\end{lem}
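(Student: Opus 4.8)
The plan is to collapse $B(j)$ into a single real sine and then apply the elementary inequality $\sin x < x$, mirroring the proof of Lemma~\ref{maxA}. First I would set $\phi_j := 2\pi\big(\frac{1}{2j-1}+\frac{1}{2j}\big)$, so that $B(j) = (2j-1)\,\abs*{1 - e^{i\phi_j}}$. The quantity $\phi_j$ is decreasing in $j$ with $\phi_2 = \frac{7\pi}{6} < 2\pi$, so $\phi_j \in (0,2\pi)$, i.e.\ $\phi_j/2 \in (0,\pi)$, for every $j \ge 2$. Hence the chord-length identity $\abs*{1 - e^{i\theta}} = 2\sin(\theta/2)$ (valid for $\theta \in [0,2\pi]$) applies with a genuinely non-negative sine, giving
\[
B(j) \;=\; 2(2j-1)\sin\!\Big(\tfrac{\phi_j}{2}\Big), \qquad \tfrac{\phi_j}{2} = \pi\Big(\tfrac{1}{2j-1}+\tfrac{1}{2j}\Big).
\]

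Next I would read off the limit. Since $\phi_j/2 \to 0$, we have $\sin(\phi_j/2) \sim \phi_j/2$, so
\[
B(j) \sim (2j-1)\,\phi_j \;=\; 2\pi\Big(1 + \tfrac{2j-1}{2j}\Big) \;\longrightarrow\; 4\pi \qquad (j \to \infty).
\]
To promote this limit to the supremum, I would apply $\sin x < x$ for $x>0$ with $x = \phi_j/2$: this yields $B(j) < (2j-1)\phi_j = 2\pi\big(1 + \frac{2j-1}{2j}\big) < 4\pi$ for all $j \ge 2$, since $\frac{2j-1}{2j} < 1$. Combining the strict upper bound $B(j) < 4\pi$ with $\lim_{j\to\infty} B(j) = 4\pi$ forces $\sup_{j \ge 2} B(j) = 4\pi$, which is the claim of the lemma (approached but not attained, exactly as in Lemma~\ref{maxA}).

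The argument is essentially obstruction-free; the only point requiring a moment's attention is verifying that $\phi_j/2$ never leaves $(0,\pi)$, which is needed both for the identity $\abs*{1-e^{i\phi_j}} = 2\sin(\phi_j/2)$ to hold with the correct sign and for $\sin x < x$ to be applied at $x = \phi_j/2$. This follows immediately from $\phi_j$ being decreasing in $j$ and already below $2\pi$ at $j=2$, so no delicate estimate is involved. Monotonicity of $B(j)$ itself is not needed — the strict bound together with the limit already pins down the supremum — so I would not bother establishing it.
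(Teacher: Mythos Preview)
Your proof is correct. Both you and the paper begin identically: with $x = \phi_j/2 = \pi\big(\tfrac{1}{2j-1}+\tfrac{1}{2j}\big) \in (0,\pi)$ you rewrite $B(j) = 2(2j-1)\sin x$, and both compute the limit $4\pi$ directly. The divergence is in the last step. The paper treats $j$ as a real variable, computes $B'(j)$, and argues $B'(j) > 0$ via the inequality $\sin x - x\cos x > 0$ on $(0,\pi)$, concluding that $B$ is increasing toward its limit. You instead apply $\sin x < x$ to get $B(j) < (2j-1)\phi_j = 2\pi\big(1 + \tfrac{2j-1}{2j}\big) < 4\pi$ outright, so the supremum is pinned by the limit without any monotonicity claim. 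Your route is the more economical one: it sidesteps the derivative computation entirely and uses only the single inequality $\sin x < x$, at the modest cost of not establishing that $B(j)$ is actually increasing (which, as you note, is irrelevant for the bound $\abs{T(j)} < s + 4\pi$ that the lemma feeds into).
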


\begin{proof}
The $j \to \infty$ limit is obtained by direct calculation. Setting $x(j):=\pi \big(\frac{1}{2j-1}+\frac{1}{2j}\big)$, we have that $x(j) \in (0, \pi)$ for all $j \geq 2$ since $x(2) < \pi$ and $x(j)$ is a nonnegative monotonically decreasing function. From this, we can simplify $B(j)$ to obtain
\begin{equation}
    B(j):= \abs*{(2j-1)(1-\exp(2 i x))} = 2(2j-1) \sin(x) \text{ for } j \in [2,\infty).
\end{equation}
Calculation of $B'(j)$ reveals
\begin{equation}
    B'(j) = \frac{\pi}{j^2} + 4(\sin x - x \cos x).
\end{equation}
Since $\sin x - x \cos x$ is positive on $(0, \pi)$, $B(j)$ is increasing for all $j \geq 2$.
\end{proof}
 
 \noindent Plugging Lemmas \ref{maxA} and \ref{lemBj} into (\ref{Tj}) and (\ref{Fj}) affords
 \begin{equation}
     \sum_{j=2}^{\infty} \abs*{F(j)} < (4 \pi + s)\big(\zeta(1+s)(1-2^{-1-s})-1\big),
 \end{equation}
 where $\zeta(1+s) < \infty$ for $s > 0$, hence $\sum_{j=2}^{\infty} F(j)$ is absolutely convergent for $s \in (0,1]$. Since the summand of $\vn{\ell_s}{n}$ vanishes as $n \to \infty$, $\lim_{n \to \infty} \vn{\ell_s}{2n+1} = \lim_{n \to \infty} \vn{\ell_s}{2n} = \sum_{j=2}^{\infty} F(j) = W(s)$, so $W(s)$ is convergent.

\bigskip

\noindent \underline{Case 4: $s = 0$}. As in Case 3, we accelerate convergence of $\vn{\ell_0}{n}$ by pairing terms:
\begin{equation}
    \vn{\ell_0}{2n} = \sum_{j=2}^{n} \big( f(2j) - f(2j-1) \big) =: U(n). \label{Saccel2}
\end{equation}

Let $r \geq 1$ be a rational constant such that $nr \in \mathbb{N}$. We desire to show that the distance between two arbitrary, evenly indexed spiral points $U(nr)$ and $U(n)$ is a sinusoidal function of $r$ in the $n \to \infty$ limit. Hence, we desire to calculate $B(r) := \lim_{n \to \infty} \abs*{U(nr)-U(n)}$.

 Plugging the asymptotic harmonic series expansion $H_n = \gamma + \log(n) + \frac{1}{2n} + \mathcal{O}(n^{-2})$ into $B(r)$, we simplify and apply the Euler-Maclaurin formula to obtain
\begin{flalign}
    B(r) & = \lim_{n \to \infty} \abs*{\sum_{j=n+1}^{nr} \Big(e^{-4 \pi i \log(2j) + \mathcal{O}(\frac{1}{j^2})} - e^{-4 \pi i \log(2j-1) + \mathcal{O}(\frac{1}{j^2})}\Big)}\\
    & = \lim_{n \to \infty} \Bigg\lvert \int_{n}^{nr} \Big(e^{-4\pi i \log(2x)} - e^{-4\pi i \log(2x-1)}\Big)dx + \text{error terms} \hspace{1mm}\Bigg\rvert. \label{EMbound}
\end{flalign}

Using standard bounds \cite{Lehmer}, we show that the error terms vanish as $n \to \infty$. Integrating \ref{EMbound} affords
\begin{equation}
    B(r) = \lim_{n \to \infty} \Bigg\lvert \Bigg[\frac{e^{(1-4\pi i) \log(2x)}-e^{(1-4\pi i) \log(2x-1)}}{2(1-4\pi i)}\Bigg]_{n}^{nr} \Bigg\rvert.\label{Blim}
\end{equation}

Expanding out the expression for $B(r)$, we make use of Euler's formula to separate real and imaginary parts of the complex exponential terms. It is helpful to assign variables to the following expressions for clarity:
\begin{flalign}
    X(x) & := e^x\cos(4\pi x), \hspace{2mm} Y(x) := e^x\sin(4\pi x) \label{vars1} \\
    a & := \log \big(2nr\big), \hspace{2mm} b := \log \big(2nr - 1\big) \label{vars2} \\
    c & := \log(2n), \hspace{2mm} d := \log(2n-1). \label{vars3}
\end{flalign}

After collecting all the real and imaginary parts, we can write down Equation \ref{limexpr}. Using the Maclaurin series $\log(1-\epsilon) = - \epsilon + \mathcal{O}(\epsilon^2)$, we obtain the large $n$ asymptotics $b = \log(2nr) + \log(1-\frac{1}{2nr}) \approx \log(2nr) - \frac{1}{2nr}$ and $c = \log(2n) + \log(1-\frac{1}{2n}) \approx \log(2n) - \frac{1}{2n}$. Plugging these expressions into \ref{limexpr}, we obtain $X(a) - X(b) \approx 2nr \big[\cos(4\pi \log(2nr))-\big(1-\frac{1}{2nr}\big)\cos(4\pi \log(2nr) -\frac{4\pi}{2nr})\big]$, for example. We then apply the angle sum formulas for $\cos(\alpha - \beta)$ and $\sin(\alpha - \beta)$ to expand the relevant $\sin$ and $\cos$ terms. Following this, we use the truncated Maclaurin series $\cos(\epsilon) \approx 1 - \frac{\epsilon^2}{2}$ and $\sin(\epsilon) \approx \epsilon$ to eliminate all $\sin$ and $\cos$ terms which do not have argument $4\pi \log(2nr)$ or $4\pi \log(2n)$. After squaring the two terms under the square root and simplifying using standard trigonometric identities, we obtain $B(r) = \lim_{n \to \infty} \sqrt{\sin^2(2\pi \log(r)) + \mathcal{O}(\frac{1}{n})}$, which yields Equation \ref{Blim1}. The Equation \ref{limexpr} limit can be evaluated in Mathematica for verification. 

\begin{flalign}
    B(r) & = \abs*{\frac{1}{2(1-4\pi i)}} \lim_{n \to \infty} \sqrt{\big(X(a)-X(b)-X(c)+X(d)\big)^2 + \big(Y(a)-Y(b)-Y(c)+Y(d)\big)^2} \label{limexpr}\\
    & = \abs*{\sin(2\pi \log(r))} .\label{Blim1}
\end{flalign}

As claimed, the distance between arbitrary points $U(nr)$ and $U(n)$ in the $n \to \infty$ limit is sinusoidally dependent on the distance parameter $r$. Since $U(n) = \vn{\ell_0}{2n}$, Equation \ref{Blim1} shows that all evenly indexed points on the spiral converge to a circle with diameter 1 (Fig. \ref{n=0_spiral}). By explicitly constructing the circle through $\vn{\ell_0}{2n-2}, \vn{\ell_0}{2n},$ and $\vn{\ell_0}{2n+2},$ it is straightforward to show that $\vn{\ell_0}{2n+1}$ is on this circle in the $n \to \infty$ limit, which confirms that the odd spiral points converge to the same circle.

\bigskip

\noindent This concludes the proof of Theorem \ref{thm1}.

\end{proof}

Theorem \ref{thm1} provides the limiting behavior of $\vn{\ell_s}{n}$ for $\ell_s(n) = n^{-s}$, which may be used to find the limiting behavior of $V(n)$ for geometrically significant length functions which are asymptotic to $\ell_s$. For example, we can consider a Definition \ref{dcon} spiral construction where the length function is determined by each $n$-gon being inscribed or circumscribed inside a circle of radius $n^{-s}$, $s \in \mathbb{R}$.

\begin{corollary}[Corollary of Theorem \ref{thm1}]
Let $\pp{insc}$ and $\pp{circ}$ denote the $n$-gon spiral constructions where each $n$-gon is inscribed and circumscribed with respect to a circle of radius $n^{-s}$, $s \in \mathbb{R}$, respectively. Then $\lim_{n \to \infty} \vn{insc}{n}$ and $\lim_{n \to \infty} \vn{circ}{n}$ converge for $s > -1$. 
\end{corollary}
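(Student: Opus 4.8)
The plan is to reduce the corollary to Theorem \ref{thm1} by relating the side lengths of inscribed/circumscribed $n$-gons to the power-law length function $\ell_s$, and then handling the discrepancy between the resulting length functions by an absolute-convergence argument. Recall that a regular $n$-gon inscribed in a circle of radius $\rho$ has side length $2\rho\sin(\pi/n)$, while one circumscribed about a circle of radius $\rho$ has side length $2\rho\tan(\pi/n)$. Thus with $\rho = n^{-s}$ we have $l_{\mathrm{insc}}(n) = 2n^{-s}\sin(\pi/n)$ and $l_{\mathrm{circ}}(n) = 2n^{-s}\tan(\pi/n)$. Since $\sin(\pi/n) = \pi/n + \mathcal{O}(n^{-3})$ and $\tan(\pi/n) = \pi/n + \mathcal{O}(n^{-3})$, both length functions are asymptotic to $2\pi\, n^{-(s+1)} = 2\pi\,\ell_{s+1}(n)$.

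The key steps, in order, are as follows. First, write $\vn{\mathrm{insc}}{n} = \sum_{k=3}^n l_{\mathrm{insc}}(k)\, e^{i\theta_k}$ using Lemma \ref{lamv} (the vertex formula holds for an arbitrary length function, so it applies verbatim here). Second, split $l_{\mathrm{insc}}(k) = 2\pi\, k^{-(s+1)} + g(k)$, where $g(k) := 2k^{-s}\sin(\pi/k) - 2\pi k^{-(s+1)}$. A Taylor expansion of $\sin$ shows $|g(k)| = \mathcal{O}(k^{-(s+3)})$, so $\sum_{k\ge 3}|g(k)|\,|e^{i\theta_k}| = \sum_{k\ge 3}|g(k)|$ converges whenever $s+3 > 1$, i.e. for all $s > -2$ — in particular for all $s > -1$. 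Hence $\sum_{k} g(k) e^{i\theta_k}$ converges absolutely. Third, invoke Theorem \ref{thm1}: the series $\sum_k 2\pi\, k^{-(s+1)} e^{i\theta_k} = 2\pi\, W(s+1)$ converges precisely when $s+1 > 0$, i.e. when $s > -1$. Adding the two pieces, $\lim_{n\to\infty}\vn{\mathrm{insc}}{n} = 2\pi\,W(s+1) + \sum_{k=3}^\infty g(k)e^{i\theta_k}$ exists for $s > -1$. The circumscribed case is identical, replacing $\sin(\pi/k)$ by $\tan(\pi/k)$; the analogous remainder $\tilde g(k) := 2k^{-s}\tan(\pi/k) - 2\pi k^{-(s+1)}$ again satisfies $|\tilde g(k)| = \mathcal{O}(k^{-(s+3)})$ by the Taylor expansion of $\tan$, so the same decomposition applies.

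I do not expect a serious obstacle here; the corollary is essentially a perturbative consequence of Theorem \ref{thm1}. The only point requiring a little care is making the asymptotic comparison rigorous: one should exhibit an explicit bound of the form $|g(k)| \le C k^{-(s+3)}$ valid for all $k \ge 3$ (not merely asymptotically), which follows from the alternating-series/Taylor remainder estimate $|\sin(\pi/k) - \pi/k| \le (\pi/k)^3/6$ and the corresponding elementary bound $|\tan(\pi/k) - \pi/k| \le c(\pi/k)^3$ valid on the range $\pi/k \le \pi/3$. With such a bound in hand, the dominated/absolute convergence of the remainder series is immediate, and the statement ``converges for $s > -1$'' follows by matching the constraint $s+1 > 0$ from Theorem \ref{thm1} against the strictly weaker constraint $s > -2$ from the remainder. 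One could also note that the threshold $s = -1$ is sharp: at $s = -1$ the dominant term $2\pi\, k^{-(s+1)} e^{i\theta_k} = 2\pi\, e^{i\theta_k}$ has modulus $2\pi \not\to 0$, so both spirals diverge, exactly as in Case 1 of Theorem \ref{thm1}.
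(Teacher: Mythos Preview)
Your proposal is correct and follows the same approach as the paper: compute the inscribed/circumscribed side lengths, expand them as $2\pi\,n^{-(s+1)} + \mathcal{O}(n^{-(s+3)})$, and invoke Theorem~\ref{thm1} with exponent $s+1$. The paper's proof is a two-sentence sketch that simply writes down these asymptotics and asserts ``the result follows by Theorem~\ref{thm1}''; you have made explicit the step it leaves implicit, namely splitting $l(k)e^{i\theta_k}$ into the leading term $2\pi\,\ell_{s+1}(k)e^{i\theta_k}$ handled by the theorem plus an absolutely convergent remainder $\sum g(k)e^{i\theta_k}$ with $|g(k)|=\mathcal{O}(k^{-(s+3)})$.
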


\begin{proof}
The side lengths of the inscribed and circumscribed $n$-gons are given by $\ell_{insc}(n) = 2n^{-s} \sin(\frac{\pi}{n}) < \frac{2\pi}{n^{1+s}}$ for all $n > 0$ and $\ell_{circ}(n) = 2n^{-s} \tan(\frac{\pi}{n}) < \frac{2\pi}{n^{1+s}} + \frac{8 \pi^3 \sqrt{3}}{n^{2+s}}$ for all $n \geq 3$, respectively. The $\ell_{circ}(n)$ bound follows from computing an upper bound on the remainder term of the corresponding Taylor series. The result follows by Theorem \ref{thm1}.
\end{proof}

\noindent A similar result can be shown for the $n$-gon spiral where each $n$-gon has area $n^{-s}$.

\section{The telescoping spiral}
\label{telescoping_spiral}

Here, we present a special choice of the length function that leads to closed form formulae for the discrete spiral points as well as their smooth continuation. 

\begin{theorem}[The telescoping spiral]
\label{thm_analytic}\

\noindent The $n$-gon spiral with length function $L(k) = 2 \cos (\frac{2\pi}{k})$ admits the following closed form smooth continuation for $n \in \mathbb{R}_{>1}$:
\begin{flalign}
    \vn{L}{n} & = -1 + (-1)^n e^{-4 \pi i(\gamma + \psi(n+1))}, \nonumber\\
    Q_L(n) & = (-1)^n e^{-4 \pi i(\gamma + \psi(n))} \bigg(\frac{1+e^{-\frac{4 \pi i}{n}}}{1-e^{-\frac{2 \pi i}{n}}}\bigg). \nonumber
\end{flalign}

\end{theorem}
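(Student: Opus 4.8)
The plan is to evaluate the sum in Lemma~\ref{lamv} directly; the length function $L(k)=2\cos\bigl(\tfrac{2\pi}{k}\bigr)$ is chosen precisely so that the resulting sum telescopes. First I would write $L(k)=e^{2\pi i/k}+e^{-2\pi i/k}$ and combine it with the rotation factor $e^{2\pi i(1/k-2H_k)}$ supplied by Lemma~\ref{lamv}; since $L(k)\,e^{2\pi i/k}=1+e^{4\pi i/k}$, the $k$th summand of $\vn{L}{n}$ is $(-1)^k\bigl(1+e^{4\pi i/k}\bigr)e^{-4\pi i H_k}$. The key step is the identity $H_k-H_{k-1}=\tfrac1k$, which yields $e^{4\pi i/k}e^{-4\pi i H_k}=e^{-4\pi i H_{k-1}}$, so the summand equals $(-1)^k\bigl(e^{-4\pi i H_{k-1}}+e^{-4\pi i H_k}\bigr)=a_k-a_{k-1}$ with $a_k:=(-1)^k e^{-4\pi i H_k}$.

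Summing from $k=3$ to $n$ then collapses the telescope to $\vn{L}{n}=a_n-a_2$. Since $H_2=\tfrac32$ we get $a_2=e^{-6\pi i}=1$, hence $\vn{L}{n}=-1+(-1)^n e^{-4\pi i H_n}$ for every integer $n>1$. To obtain the continuation I would replace $H_n$ by $\gamma+\psi(n+1)$, the standard analytic extension of the harmonic numbers (it agrees with $H_n$ on $\mathbb{N}$ and is real-analytic and pole-free for $n>-1$), and read $(-1)^n$ as $e^{i\pi n}$; the right-hand side is then an analytic function of $n\in\mathbb{R}_{>1}$ interpolating the discrete spiral, which is the claimed formula. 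I would emphasize here that the general interpolation~(\ref{interp}) does \emph{not} apply in this case: for $l=L$ the terms $L(k)e^{i\theta_k}$ do not tend to $0$, so that series diverges, which is exactly why the closed form has to be produced by the telescoping identity and then continued directly.

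For $Q_L$ I would substitute $l=L$ into Lemma~\ref{lamc}. The same computation $L(n)e^{2\pi i/n}=1+e^{4\pi i/n}$ converts the formula there into $Q_L(n)=(-1)^n e^{-4\pi i H_n}\cdot\frac{1+e^{4\pi i/n}}{e^{2\pi i/n}-1}$, and writing $z:=e^{2\pi i/n}$ I would finish with the elementary rearrangement $\frac{1+z^2}{z-1}=z+\frac{z+1}{z-1}$; substituting $H_n=\gamma+\psi(n+1)$ gives the stated expression. The computation is short, so the main obstacle is really the telescoping observation itself — equivalently, recognizing that one should look for a length function making $e^{4\pi i/k}e^{-4\pi i H_k}$ coincide with the harmonic rotation of index $k-1$, which is what forces $L(k)=2\cos\bigl(\tfrac{2\pi}{k}\bigr)$.
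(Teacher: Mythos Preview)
Your proposal is correct and follows essentially the same route as the paper: write $L(k)$ in exponential form, observe that the summand from Lemma~\ref{lamv} becomes $(-1)^k\bigl(e^{-4\pi i H_{k-1}}+e^{-4\pi i H_k}\bigr)$ and telescopes to $-1+(-1)^n e^{-4\pi i H_n}$, then continue via $H_n=\gamma+\psi(n+1)$ and substitute into Lemma~\ref{lamc} for $Q_L$. You supply more algebraic detail (the $a_k-a_{k-1}$ rewriting and the identity $\tfrac{1+z^2}{z-1}=z+\tfrac{z+1}{z-1}$) than the paper, which simply announces the telescoping, but the argument is the same.
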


\begin{figure}[!h]

\begin{subfigure}{.585\textwidth}
	\centering\captionsetup{width=\linewidth}
	\includegraphics[width=\textwidth]{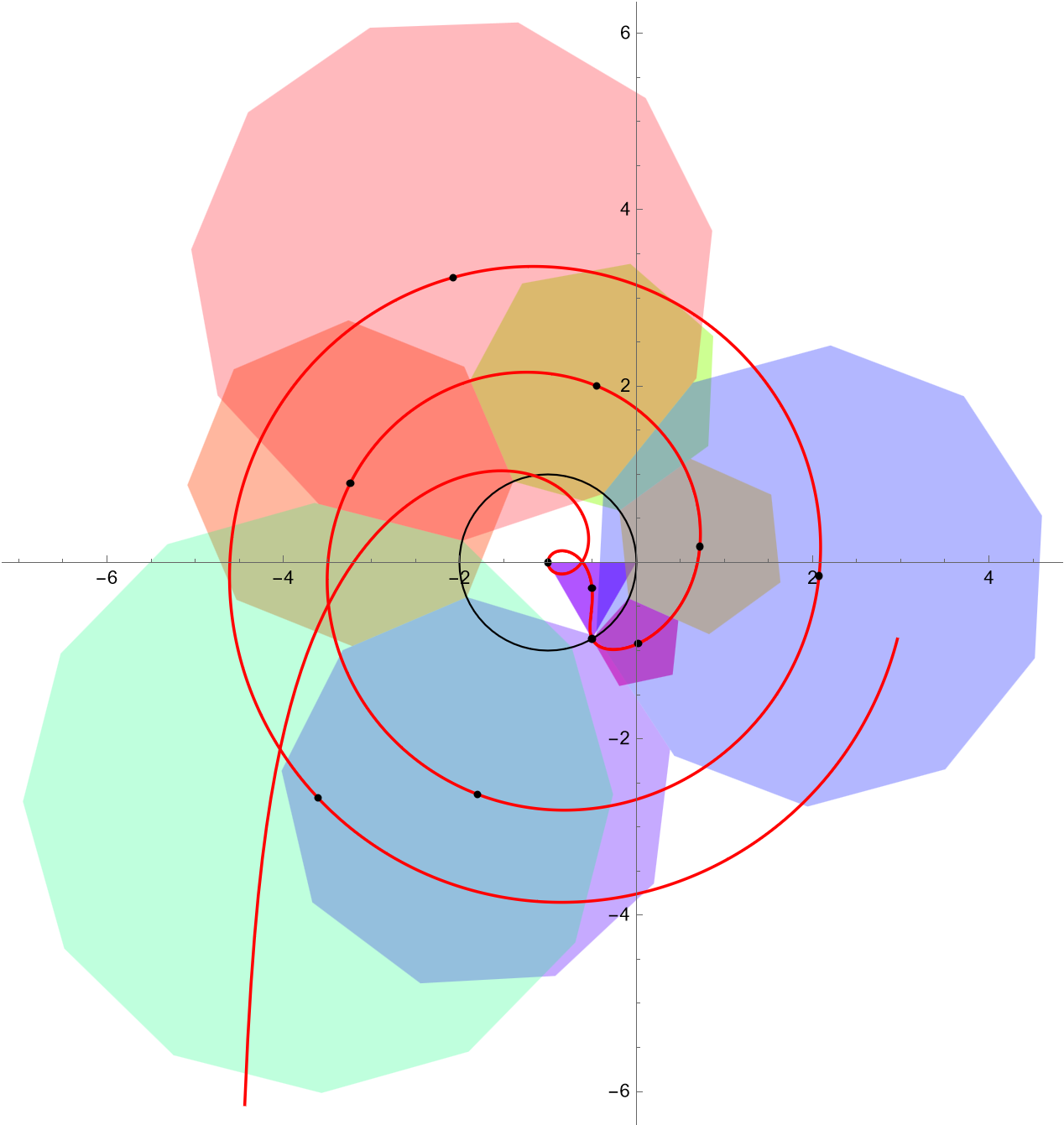}
\caption{Plot of the polygon spiral $\pp{L}$ up to the $12$-gon. $\vn{L}{n}$ traces out the unit circle centered at $-1$. The polygon centers are also marked, with their smooth continuation $\cn{L}{n} = \vn{L}{n-1} + Q_L(n)$ plotted from $n = 1.05$.}
	\label{degen}
\end{subfigure}
\begin{subfigure}{.40\textwidth}
    \centering\captionsetup{width=0.8\linewidth}
	\includegraphics[width=0.9\textwidth]{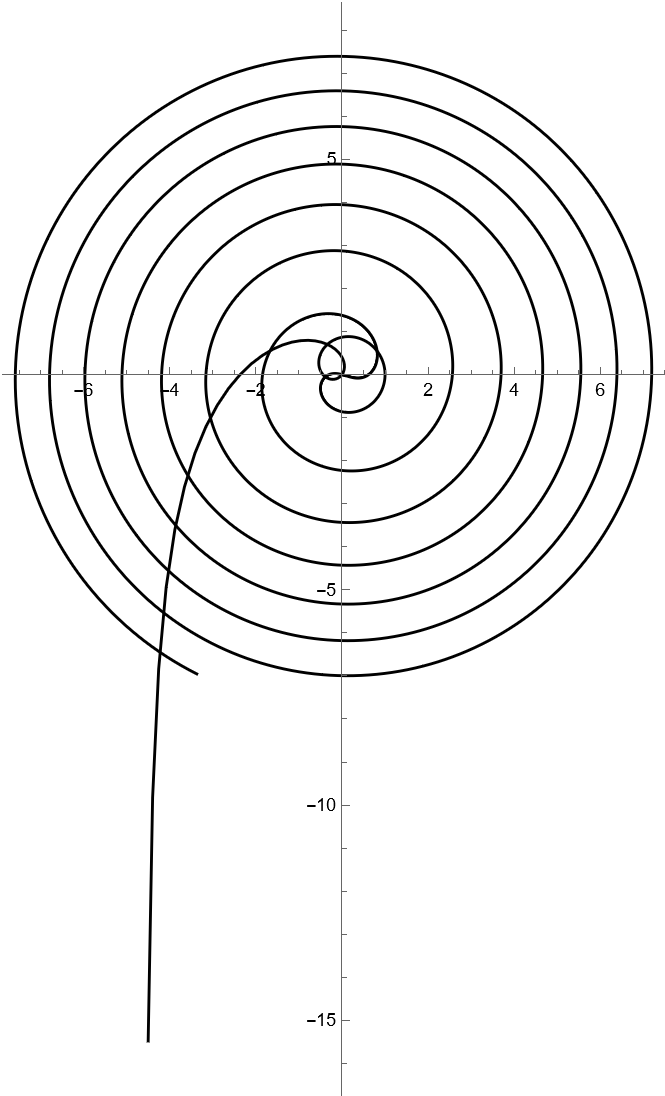}
	\caption{$Q_L(n)$ plotted from $n=1.02$ to $n=25$. In the limit as $n \to 1$, Re$(Q_L(n)) \to 2(1-\frac{\pi^2}{3})$.\vspace{4mm}}
	\label{Qspiral}
\end{subfigure}

	\caption{The telescoping spiral construction and smooth continuation.}
	\label{tele_spiral}
	
\end{figure}

\begin{proof}
Writing $L(k)$ in exponential form, we plug it into Lemma \ref{lamv} and simplify to obtain a telescoping series!
\begin{equation}
\label{tele1}
    \vn{L}{n} = \sum_{k=3}^n (-1)^k \big(e^{-4 \pi i H_{k-1}} + e^{-4 \pi i H_{k}} \big) = -1 + (-1)^n e^{-4 \pi i H_n}.
\end{equation}

The continuation of the Harmonic numbers to real values of $n$ via $ H_n = \gamma + \psi(n+1)$ provides a direct continuation of $\vn{L}{n}$ to $n \in \mathbb{R}_{>1}$, where $n \leq 1$ is excluded because we cannot construct a $1$-gon to obey Definition \ref{dcon} (and $\cn{L}{1}$ is not defined). By plugging $L(n)$ into Proposition \ref{centers}, we also obtain a closed-form smooth continuation of $Q_L(n)$, and hence $\cn{L}{n}$. 

\end{proof}

From Theorem \ref{thm_analytic}, we see that all the values of $\vn{L}{n}$ lie on the unit circle centered at $-1$ (Fig. \ref{degen}), hence $\vn{L}{n}$ forms a degenerate spiral for $n \in (1, \infty)$. On the interval $n \in (1,\infty)$, $L(n)$ is negative for $n \in (\frac{4}{3}, 4)$, zero at $n= \frac{4}{3}, 4$, and positive everywhere else. At the zeros of $L(n)$,  the $n$-gon centers and vertices coincide (Fig. \ref{degen}) and $Q_L(n) = 0$ (Fig. \ref{Qspiral}).

\begin{rem}[Golden ratio intersection]
\label{rem1}
In Fig. \ref{degen}, we observe that $\cn{L}{n}$ intersects itself inside the unit disk centered at $-1$. This intersection occurs at $n = \varphi := \frac{\sqrt{5} + 1}{2}$ and $n = \varphi + 1$. At this point, $\cn{L}{\varphi} = -i e^{-\pi i(4(\gamma + \psi(\varphi))+\varphi)} \cot(\pi \varphi) - 1$.
\end{rem}

\section{Further Directions}

There is much additional work to be done toward understanding the $n$-gon spiral introduced here. We showed that the sequence of shared polygon vertex points in Fig. \ref{lam1} converges, but is it possible to find a closed-form expression for this convergence value? Additionally, Theorem \ref{thm1} only describes the $n \to \infty$ behavior of the $n$-gon spiral for a particular family of length functions. Can one develop a convergence condition that applies to arbitrary length functions?

There is a long history of studying curves that originate from geometric constructions (see \cite{Yates} for historical notes). In Proposition \ref{interp}, we give a formula for a smooth curve interpolating the points of the sequence $\vn{l}{n}$. Studying how this curve compares to classical spirals in the literature is a natural follow up question. Additionally, the closed-form smooth continuation of the telescoping spiral (Theorem \ref{thm_analytic}) makes this spiral a particularly attractive choice for further study. We identified self-intersections of $Q_L(n)$ and $\cn{L}{n}$ at $n = \frac{4}{3}, 4$ and $n = \varphi, \varphi + 1$, respectively---do these spirals have other self-intersections at algebraic values of $n$? Can we use the smooth continuation of $\vv{L}$ and $\cc{L}$ to define a natural, continuous geometric transformation from a regular $n$-gon to a regular $(n+1)$-gon? If so, what would a ``regular polygon'' with a non-integer number of sides look like?

The notion of arranging polygons with increasing numbers of sides is not limited to the polygonal spiral discussed here. In particular, making changes to each of the rules of Definition \ref{dcon} offers flexibility for discovering a range of intriguing polygonal constructions. 

\section{Acknowledgements}
I would like to thank Ryan Mike for helpful discussions concerning this article. I would like to thank the anonymous referee for providing comments which significantly improved the quality of the presentation. I am especially grateful to Dusty Grundmeier for his constructive feedback and mentorship during the process of writing this article.

\end{document}